\newtheorem{thm}{Theorem}
\newtheorem{prop}[thm]{Proposition}
\newtheorem{cor}[thm]{Corollary}
\newtheorem{definition}[thm]{Definition}
\newtheorem{problem}[thm]{Problem}
\theoremstyle{definition}
\newtheorem{case}{Case}[thm]
\theoremstyle{remark}
\newtheorem{rem}{Remark}
\newtheorem{clm}{Claim}[thm]
\renewcommand{\theclm}{\arabic{clm}}
\newenvironment{poc}{\begin{proof}[Proof of Claim $\theclm$]}{\end{proof}}
\newcommand{\eps}{\varepsilon}
\newcommand*{\floor}[1]{\lfloor#1\rfloor}
\newcommand*{\ceil}[1]{\lceil#1\rceil}
\newcommand*{\theorem}[1]{Theorem~\ref{#1}}
\newcommand*{\corollary}[1]{Corollary~\ref{#1}}
\newcommand*{\abs}[1]{\lvert#1\rvert}
\newcommand*{\seq}[3][1]{#2_{#1},\ldots,#2_{#3}}
\newcommand*{\hh}[1][h]{\mathcal{\uppercase{#1}}}
\newcommand*{\cod}[1][h]{\delta_2(\hh[#1])}
\newcommand*{\tc}[1][h]{\operatorname{tc}(\hh[#1])}
\newcommand*{\prob}[2][p]{\mathbb{\uppercase{#1}}(#2)}
\newcommand*{\col}[1]{\operatorname{col}(#1)}
\newcommand*{\bfrac}[2]{\genfrac{(}{)}{}{}{#1}{#2}}
\def\blfootnote{\xdef\@thefnmark{}\@footnotetext}
\title{Forcing large tight components in $3$-graphs%
\blfootnote{\copyright~2018. This manuscript version is made available under the CC-BY-NC-ND 4.0 license \url{http://creativecommons.org/licenses/by-nc-nd/4.0/}.
To appear in the \textit{European Journal of Combinatorics} \textbf{77} (2019), pp 57--67. DOI: \url{https://doi.org/10.1016/j.ejc.2018.11.001}}}
\author[1]{Agelos Georgakopoulos}
\author[2]{John Haslegrave}
\affil[1,2]{{Mathematics Institute}\\
	{University of Warwick}\\
	{CV4 7AL, UK}}
\author[3]{Richard Montgomery}
\affil[3]{{School of Mathematics}\\
	{University of Birmingham}\\
	{B15 2TT, UK}}
\begin{document}
\maketitle

\begin{abstract}
Any $n$-vertex $3$-graph with minimum codegree at least $\floor{n/3}$ must have a spanning tight component, but immediately below this threshold it is possible for no tight component to span more than $\ceil{2n/3}$ vertices. Motivated by this observation, we ask which codegree forces a tight component of at least any given size. The corresponding function seems to have infinitely many discontinuities, but we provide upper and lower bounds, which asymptotically converge as the function nears the origin.
\end{abstract}

\section{Introduction}\label{sec:intro}
This paper addresses the extremal question of which minimum codegree forces a tight component containing at least a certain proportion of the vertices of a $3$-uniform hypergraph.

Connectivity in graphs is a simple concept. A vertex pair in a graph is connected if there is a path between them (or, equivalently, if there is a walk between them),
while a graph is connected if every vertex pair is connected. Generalising this concept to $k$-graphs -- hypergraphs where every edge consists of $k$ vertices -- is not straightforward. We first need to consider how to generalise a path. Several generalisations exist depending on the size of the intersection between consecutive edges, but perhaps the most natural and most studied is the \emph{tight path}. A tight path in a $k$-graph is a subgraph with a vertex ordering so that the edges of the path are exactly the sets of $k$ consecutive vertices. Analogously, a tight walk is a subgraph whose edges can be ordered so that consecutive edges share $k-1$ vertices. A tight path is always a tight walk; note, however, that for $k\geq 3$ two vertices in a $k$-graph can be connected by a tight walk without being connected by a tight path.
A key property of connectivity is that the relation of connectivity between vertices is transitive, and thus we consider tight walks when studying connectivity in hypergraphs.

However, we still need to consider \emph{which} subsets are connected. Here, we say two sets in a $k$-graph $\hh$ are connected if $\hh$ contains a tight walk where the first and last edges respectively contain the two given sets (in either order). The \textit{tight components} of $\hh$ are the equivalence classes of edges, where two edges are related if the hypergraph contains a tight walk between them. Equivalently, then, two sets are connected if they are each contained in edges in the same tight component. 

Kahle and Pittel \cite{KP16} considered the question of when all the $(k-1)$-sets of a $k$-graph are connected in this way, under the name \textit{hypergraph connectivity},
in their work studying the closely-related property of cohomological connectivity. A $k$-graph $\hh$ is \textit{cohomologically connected} if the cohomology group
$H^{k-2}(S,\mathbb Z_2)$ vanishes, where $S$ is the $(k-1)$-dimensional simplicial complex generated by the edges of $\hh$ with complete $(k-2)$-skeleton. The threshold for cohomological connectivity of the binomial random $k$-graph was established by Linial and Meshulam \cite{LM06} for $k=3$ and Meshulam and Wallach \cite{MW09} for $k>3$.  It is shown in \cite[Theorem 1.7]{KP16} that a cohomologically connected $k$-graph is hypergraph connected, and as a consequence the thresholds for the two types of connectedness coincide.

From a combinatorial point of view, however, hypergraph connectivity is the more natural notion.
To study it from an extremal perspective, we need to generalise the minimum degree of a graph to a $k$-graph. We work with the \textit{codegree}, that is the number of ways to extend a given $(k-1)$-set into an edge by adding a single vertex. Write $\delta_{k-1}(\hh)$ for the minimum codegree over all $(k-1)$-sets of $\hh$. It is not hard to see (a proof is given in Section~\ref{sec:basic}) that an $n$-vertex $k$-graph $\hh$ is hypergraph connected if $\delta_{k-1}(\hh)>\frac{n-k}2$, and that this is best possible.

In this paper, we will consider the connectivity of individual vertices in $k$-graphs, concentrating on the case $k=3$. Note that all vertices are connected exactly when the $k$-graph contains a tight component covering all the vertices -- that is, a \emph{spanning} tight component. If an $n$-vertex $k$-graph $\hh$ has minimum codegree $\delta_2(\hh)\geq \lfloor n/3\rfloor$, then it has a spanning tight component (see \corollary{one-third}).
Interestingly, as shown in our recent work with Narayanan~\cite{spheres}, this minimum codegree is asymptotically the same minimum codegree required to guarantee that an $n$-vertex $3$-graph contains a spanning triangulation of a sphere (where the faces correspond to edges in the $3$-graph). In fact, this is the asymptotic minimum codegree required to guarantee a spanning triangulation of any fixed surface in an $n$-vertex $3$-graph~\cite{spheres}.
Since all edges in such a triangulation must be in the same tight component, any $3$-graph with a spanning triangulation of a surface contains a spanning tight component.

Another property that immediately gives a spanning tight component is the existence of a spanning tight path, also known as a Hamiltonian tight path. Along with the existence of a spanning tight cycle, this has been well studied (see, for example,~\cite{KK99,RRS06}). In particular, R\"odl, Ruci\'nski and Szemer\'edi~\cite{RRS11} showed that, for sufficiently large $n$, minimum codegree at least $\lfloor n/2\rfloor$ is enough to guarantee a spanning tight cycle. This minimum codegree condition is best possible. Tight paths and cycles covering a constant proportion of the vertices have also been well studied (see, for example,~\cite{ABCM,GKL16}).

If the minimum codegree is not sufficiently high to force a spanning tight component, then how large a tight component is guaranteed?
In the random analogue of this question, beneath the threshold for a spanning tight component there is, similarly to the case with graphs, a giant tight component. The emergence and size of this giant tight component in random hypergraphs was analysed (in more general work) by Cooley, Kang and Person \cite{CKP} and Cooley, Kang and Koch \cite{CKK}.
To address the extremal question, we consider the following function.
\begin{definition}\label{function}
We define a function $f_k(x): [0,1] \to [0,1]$ by letting $f_k(x)$ be the largest real number such that every $n$-vertex $k$-graph with minimum codegree at least $xn-O(1)$ has a tight component meeting at least $f_k(x)n$ of its vertices. (Omitting the ${}-O(1)$ term in this definition changes $f_r$ only slightly: from left-continuous to right-continuous.)
\end{definition}

The function $f_2$ is easy to analyse. Any $n$-vertex graph $G$ with $\delta(G)\geq\floor{n/m}$ can have at most $m-1$ components, so one of them meets at least $\ceil{n/(m-1)}$ vertices. Conversely, if $k<\floor{n/m}$ there is a graph with $m$ components meeting $\floor{n/m}$ or $\ceil{n/m}$ vertices which has minimum degree at least $k$. Thus $f_2(x)=\frac1{\floor{1/x}}$. 

In this paper we analyse $f_3$. \theorem{main} gives our upper and lower bounds, which become asymptotically tight as $x\to 0$.
Our upper bounds are based on the existence of finite projective planes of certain orders. It follows that $f_3$ is discontinuous at $x=1/3$.
We conjecture that, like $f_2$, it has infinitely many discontinuities.

\begin{figure}
\begin{center}
\includegraphics[width=0.5\linewidth]{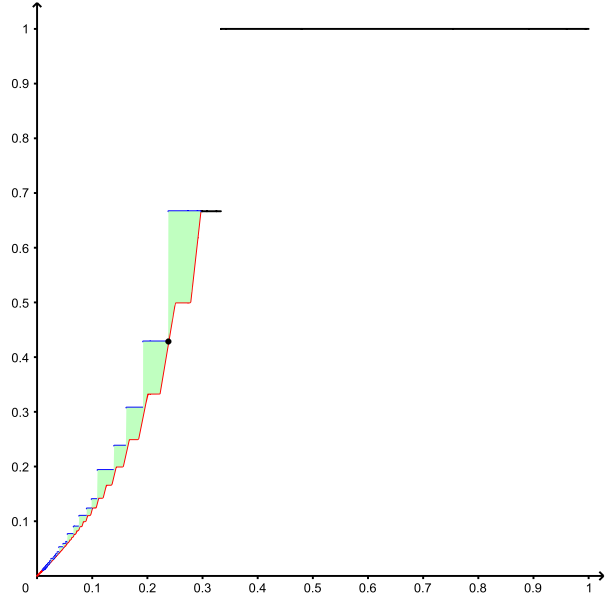}
\end{center}
\caption{Our upper (blue, if colour is shown) and lower (red) bounds on $f_3(x)$.}\label{fig:bounds}\end{figure}

\begin{thm}\label{main}Let $(r_i)_{i\geq 0}$ be the sequence of integers such that $r_i-2$ is a prime power or $0$,
and let $q_i=\frac{r_i-3+\frac2{r_i-1}}{r_i^2-3r_i+3}$.
Then for every $i\geq 0$ we have
\[f_3(x)\leq\frac{r_i-1}{r_i^2-3r_i+3}\quad\text{for }x\in(q_{i+1},q_i]\,.\]
Furthermore,
\[f_3(x)\geq\begin{cases}1&\quad\text{if }x>\frac13\,;\\
\frac23&\quad\text{if }\frac8{27}\leq x\leq\frac13\,;\\
9x-2&\quad\text{if }\frac5{18}\leq x\leq\frac8{27}\,;\\
\frac{1}{r-1}&\quad\text{if }\frac{1}{r+1}\leq x\leq\bfrac{3r-4}{3r-3}\frac 1r\text{ where }r\geq 3\,;\\
\frac{3rx-2}{r-2}&\quad\text{if }\bfrac{3r-4}{3r-3}\frac 1r\leq x\leq\frac 1r\text{ where }r\geq 4\,.
\end{cases}\]
\end{thm}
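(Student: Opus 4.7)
The plan is to produce, for each $i$, a sequence of $3$-graphs with $\delta_2/n\to q_i$ and every tight component spanning at most $\frac{r_i-1}{r_i^2-3r_i+3}n$ vertices. The main ingredient is a finite projective plane $\Pi$ of order $q=r_i-2$, which exists because $r_i-2$ is a prime power or $0$. Such a plane has $r_i^2-3r_i+3$ points, $r_i-1$ points on each line, and a unique line through every two points. First I would take the $(r_i-1)$-blow-up of $\Pi$ and include, as $3$-edges, all triples of vertices belonging to three distinct twin-classes whose underlying points are collinear. This already gives tight components corresponding to the blown-up lines of $\Pi$, each of size $(r_i-1)^2$, matching the desired ratio exactly. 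The codegree, however, is only $(r_i-1)(r_i-3)$ for cross-class pairs and $0$ for same-class pairs, so the next step is to add, for each twin-pair $\{v,v'\}$ inside a class at a point $p$, edges $\{v,v',w\}$ for $w$ ranging over the blow-up of a suitably chosen line $\ell(v,v')$ through $p$. Each such new edge shares the pair $\{v',w\}$ with an existing line-edge of $\ell(v,v')$, so it is absorbed into that line's tight component without creating any new ones. The combinatorial heart of the argument is to assign the lines $\ell(v,v')$ so that every cross-class pair of vertices picks up the extra $2/(r_i-1)$ codegree boost predicted by $q_i$, which I expect to be a routine incidence-counting calculation on $\Pi$.

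\paragraph{Lower bound.}
The case $x>1/3$ follows at once from \corollary{one-third}. For the remaining regimes, the central observation is that whenever $\delta_2(H)\geq 1$, every pair of vertices is contained in some edge and therefore lies in a unique tight component; moreover that component contains the pair together with all $\delta_2(H)$ of its codegree-completions. This yields two workhorses: every non-trivial tight component $C$ satisfies $|V(C)|\geq\delta_2(H)+2$, and $\binom{n}{2}\leq\sum_i\binom{|V(C_i)|}{2}$ where the vertex-sets of the components may overlap. To prove $f_3(x)\geq 2/3$ on the interval $[8/27,1/3]$, I would argue by contradiction: if every tight component has fewer than $2n/3$ vertices, choose a largest one $C_1$ and a vertex $v\notin V(C_1)$; each pair $\{v,u\}$ with $u\in V(C_1)$ forces a second tight component $C_2\neq C_1$ through $v$ of size at least $xn+2$. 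Tracking where the $xn$ codegree-completions of those pairs must sit produces a linear inequality in $x$, $|V(C_1)|$, $|V(C_2)|$ and $|V(C_1)\cap V(C_2)|$ that becomes infeasible exactly when $x\geq 8/27$. The linear piece $9x-2$ arises from the same bookkeeping in the marginal regime, and the generic plateau $f_3(x)\geq 1/(r-1)$ together with its interpolating slope $(3rx-2)/(r-2)$ comes from iterating the argument across up to $r-1$ overlapping nearly-maximal components of size at most $n/(r-1)$.

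\paragraph{Main obstacle.}
The hardest step, I expect, is the structural analysis behind the generic plateau. When many overlapping near-maximal components are permitted, coarse pairwise intersection estimates fall short of the exact threshold $1/(r+1)$; instead one has to set up and solve a small linear-programming-style system relating the sizes of single components, their pairwise intersections, and possibly their triple intersections, and verify that it first becomes feasible precisely at $x=1/(r+1)$. Keeping this argument uniform in $r$ and sharp at both endpoints of each plateau interval is where most of the technical work will lie.
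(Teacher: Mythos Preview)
Your upper-bound construction is essentially the paper's: blow up the points of a projective plane of order $r_i-2$ into equal classes and take as hyperedges the triples whose three pairs all lie on (the blow-up of) a common line. The paper phrases this as taking the monochromatic triangles in an edge-colouring of $K_n$, where cross-class edges receive the colour of the line through the two underlying points and within-class edges are balanced across the $r_i-1$ lines through that point; your line assignment $\ell(v,v')$ is exactly this within-class colouring. One quibble: fixing the blow-up factor at $r_i-1$ gives a single hypergraph rather than a sequence with $n\to\infty$; you want classes of arbitrary size $m\to\infty$, and then the within-class balancing is only approximate, which is where the $O(1)$ slack in $q_i n$ comes from.

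Your lower-bound strategy, however, misses the paper's central idea and is unlikely to reach the stated constants. The paper does not use the pair-count $\sum_i\binom{|V(C_i)|}{2}\geq\binom{n}{2}$ or ad hoc tracking of two or three overlapping components. Instead, after observing that every vertex meets at most $r$ tight components when $x>1/(r+1)$, it splits into three cases according to whether some vertex meets at most $r-2$ components, some vertex meets exactly $r$, or every vertex meets exactly $r-1$. The heart is the last case: form the auxiliary $(r-1)$-uniform multi-hypergraph $\mathcal{F}$ whose vertices are the tight components and whose edges (one per vertex $v$ of $\mathcal{H}$) record which components meet $v$. Since any two vertices of $\mathcal{H}$ share a component, $\mathcal{F}$ is \emph{intersecting}, and F\"uredi's fractional-matching theorem then forces $\Delta_1(\mathcal{F})\geq e(\mathcal{F})/(r-2)=n/(r-2)$ unless the underlying simple hypergraph of $\mathcal{F}$ is itself a projective plane of order $r-2$; a separate codegree estimate disposes of that projective-plane subcase unless $\eps$ is already large. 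This is precisely what produces the plateaux $1/(r-1)$ and the slopes $(3rx-2)/(r-2)$ uniformly in $r$, and what makes the lower bound touch the projective-plane upper bound at $x=q_i$. Your proposed LP on component sizes and pairwise or triple intersections has no mechanism to detect this extremal structure, and already your sketch for the $2/3$ plateau is incomplete: the pairs $\{v,u\}$ with $u\in V(C_1)$ need not all lie in a single second component $C_2$.
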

Figure~\ref{fig:bounds} shows the upper and lower bounds described in \theorem{main}. The bounds coincide for $x\in\bigl\{\frac5{21}\bigr\}\cup\bigl[\frac8{27},1\bigr]$.

A result of similar flavour, both in the problem studied and in the behaviour observed, is obtained by Allen, B\"ottcher and Hladk\'y \cite{ABH11}:
they show that the minimum degree guaranteeing the existence of the square of a path or cycle of a given length follows a step-like pattern similar
to that of Figure~\ref{fig:bounds}.

The higher functions $f_k, k>3$ might be much harder to analyse. In particular, our constructions based on projective planes do not directly
translate to $k$-graphs with large codegree, since they only guarantee that every pair of vertices is contained in many edges.
The following observation shows that our lower bounds do apply to all $k\geq 3$.
\begin{prop}For all $x\in[0,1]$ and all $k\geq 3$ we have $f_k(x)\geq f_{k-1}(x)$.\end{prop}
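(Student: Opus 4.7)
My plan is to pass to the link of a single vertex, invoke the definition of $f_{k-1}$ there, and lift the resulting large tight component back up by reinserting the vertex.

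Concretely, starting from a $k$-graph $\hh$ on $n$ vertices with $\delta_{k-1}(\hh)\geq xn-O(1)$, I would pick any $v\in V(\hh)$ and form its \emph{link} $L_v$: the $(k-1)$-graph on vertex set $V(\hh)\setminus\{v\}$ whose edges are the sets $e\setminus\{v\}$ for $e\in E(\hh)$ containing $v$. For any $(k-2)$-set $S\subseteq V(\hh)\setminus\{v\}$, the codegree of $S$ in $L_v$ equals the codegree of $S\cup\{v\}$ in $\hh$, and so $\delta_{k-2}(L_v)\geq xn-O(1)=x(n-1)-O(1)$, which is precisely the hypothesis required to apply the definition of $f_{k-1}$ to the $(n-1)$-vertex hypergraph $L_v$. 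This produces a tight component of $L_v$ covering at least $f_{k-1}(x)(n-1)$ vertices.

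The second step is the lifting: a tight walk $e_1,\ldots,e_m$ in $L_v$ satisfies $|e_j\cap e_{j+1}|=k-2$, so $\{v\}\cup e_1,\ldots,\{v\}\cup e_m$ is a tight walk in $\hh$, since consecutive members now share the $k-1$ vertices $(e_j\cap e_{j+1})\cup\{v\}$. Hence the edges of the large tight component of $L_v$, each augmented by $v$, all lie in a single tight component of $\hh$, which therefore covers at least $f_{k-1}(x)(n-1)+1\geq f_{k-1}(x)n$ vertices, using only $f_{k-1}(x)\leq 1$.

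There is no substantive obstacle here---the proof is essentially a two-line reduction via links. The only bookkeeping is verifying that the $-O(1)$ slack in the codegree hypothesis absorbs the shift from $n$ to $n-1$, and that the single vertex $v$ reinserted into the lifted component exactly compensates for the vertex lost when passing to the link. Since the choice of $v$ was arbitrary, the argument works regardless of the structure of $\hh$.
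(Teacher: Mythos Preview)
Your proof is correct and follows exactly the same approach as the paper: pass to the link of an arbitrary vertex, apply the definition of $f_{k-1}$ there, and lift the resulting tight component back by adjoining $v$ to every edge. The paper's version is terser, but your added details---the explicit codegree identity and the verification that the lifted walk is tight---are all accurate.
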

\begin{proof}For an $n$-vertex $k$-graph $\hh$ and vertex $v$, write $\hh_v$ for the link $(k-1)$-graph on $n-1$ vertices of all $(k-1)$-tuples which form an edge with $v$. Note that if $\hh$ has minimum codegree at least $xn-O(1)$ then so does $\hh_v$, and so $\hh_v$ has a tight component meeting at least $f_{k-1}(x)(n-1)$ vertices. The corresponding edges of $\hh$ are all in the same tight component, which meets at least $f_{k-1}(x)(n-1)+1\geq f_{k-1}(x)n$ vertices.\end{proof}
Our lower bounds on $f_3$ give in particular arbitrarily small values of $x$ for which $f_3(x)\geq\frac{1}{1/x-2}$. This bound is in some sense best possible, since our upper bounds also give arbitrarily small values of $x$ for which $f_3(x)<\frac{1}{1/x-2}$. However, for $k>3$ we might expect a better bound of the form $\frac{1}{1/x-c}$ to hold for small $x$, where $c>2$.
\begin{problem}
Provide asymptotic formulae for $f_k$, when $k>3$.
\end{problem}

\section{Hypergraph connectivity and spanning tight components}\label{sec:basic}
Before starting the analysis of $f_3$ which will be the focus of this paper, we give a justification for the extremal codegree forcing hypergraph connectivity mentioned in Section~\ref{sec:intro}. Recall that a $k$-graph $\hh$ is hypergraph connected if every two $(k-1)$-tuples of vertices of $\hh$ are connected by a tight walk.
\begin{prop}Any $k$-graph $\hh$ on $n\geq k$ vertices with minimum codegree exceeding $\frac{n-k}{2}$ is hypergraph connected, and this is best possible.\end{prop}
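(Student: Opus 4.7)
The plan is in two parts: a sufficiency argument and a matching construction.

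For sufficiency I would first prove the local statement that, whenever two $(k-1)$-sets $S_1, S_2$ differ in exactly one vertex, there is either a single edge containing both, or a tight walk of length two from an edge containing $S_1$ to an edge containing $S_2$. Writing $S_1 = A \cup \{u_1\}$ and $S_2 = A \cup \{u_2\}$ with $A = S_1 \cap S_2$, and letting $X_i$ denote the set of vertices extending $S_i$ to an edge (so that $|X_i| > (n-k)/2$ by hypothesis), I split into cases. If $u_2 \in X_1$ or $u_1 \in X_2$ then $A \cup \{u_1, u_2\}$ is an edge containing both $S_1$ and $S_2$; otherwise $X_1, X_2 \subseteq (V \setminus A) \setminus \{u_1, u_2\}$, a set of size $n-k$, and since $|X_1| + |X_2| > n-k$ they share some $w$, giving two edges $A \cup \{u_1, w\}$ and $A \cup \{u_2, w\}$ which overlap in the $(k-1)$-set $A \cup \{w\}$ and contain $S_1, S_2$ respectively. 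To connect arbitrary $(k-1)$-sets $S$ and $T$, I would take a sequence $S = S_0, S_1, \ldots, S_m = T$ with $|S_i \cap S_{i+1}| = k-2$ (which exists for $n \geq k$), apply the local statement at each step, and concatenate: consecutive walks end and start respectively in edges containing the same $(k-1)$-set $S_i$, so either these edges coincide or they share exactly $k-1$ vertices, and the concatenation is itself a tight walk.

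For the matching construction I would partition $V = A \cup B$ into parts of nearly equal size, fix a threshold $t^\ast \in \{1, \ldots, k-1\}$, and let $\mathcal{H}$ consist of all $k$-sets $e$ with $|e \cap A| \neq t^\ast$. A single tight-walk step changes $|e \cap A|$ by at most one, so no tight walk can cross the forbidden value $t^\ast$: the edges split into (at least) two tight components according to whether $|e \cap A|$ lies below or above $t^\ast$. A $(k-1)$-set $S$ with $|S \cap A| = t^\ast - 1$ extends to an edge only by a vertex of $B$ and so lies only in edges of the lower component, while one with $|S \cap A| = t^\ast$ lies only in the upper, witnessing failure of hypergraph connectivity. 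The codegree of a $(k-1)$-set is $n-k+1$ unless $|S \cap A| \in \{t^\ast - 1, t^\ast\}$, in which case it equals $|B| - k + t^\ast$ or $|A| - t^\ast$ respectively; the remaining bookkeeping is to choose $t^\ast$ and the sizes of $A$, $B$ according to the parities of $n$ and $k$ so that both of these quantities equal $\lfloor (n-k)/2 \rfloor$.

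The only real obstacle is the pigeonhole in the local step, whose slight subtlety is the case split on whether $A \cup \{u_1, u_2\}$ itself happens to be an edge; everything else is bookkeeping, with the parity juggling in the extremal construction being the only other part that requires a moment of care.
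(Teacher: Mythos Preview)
Your argument is correct and follows essentially the same approach as the paper. The paper's sufficiency step applies the same pigeonhole directly to an arbitrary pair $A,B$ of distinct $(k-1)$-sets to produce a pair with strictly larger intersection (rather than first chaining down to the case $|S_1\cap S_2|=k-2$ as you do), but the content is identical. For the extremal construction the paper simply takes your $t^\ast=1$ with $|A|=\lfloor(n-k)/2\rfloor+1$; the two restricted codegrees are then $\lfloor(n-k)/2\rfloor$ and $\lceil(n-k)/2\rceil$, so the minimum is automatically $\lfloor(n-k)/2\rfloor$ and no parity juggling is needed.
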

\begin{proof}A $k$-graph with minimum codegree $\floor{\frac{n-k}{2}}$ which is not hypergraph connected may be constructed by choosing any set $W$ of $\floor{\frac{n-k}{2}}+1$ vertices, and defining the edges of $\hh$ to be all $k$-tuples which do not meet $W$ in exactly one vertex; no $(k-1)$-tuple meeting $W$ is connected to any $(k-1)$-tuple avoiding $W$.

Suppose $\hh$ has minimum codegree exceeding $\frac{n-k}{2}$, and let $A,B$ be distinct $(k-1)$-tuples. Then either $A\cup x\in E(\hh)$ for some $x\in B\setminus A$ or
$B\cup y\in E(\hh)$ for some $y\in A\setminus B$ or $A\cup z, B\cup z\in E(\hh)$ for some $z\not\in A\cup B$. In each of these cases we may find $(k-1)$-tuples $A', B'$ with more common elements than $A,B$ with the property that $A,B$ are connected if and only if $A',B'$ are connected. Since any $(k-1)$-tuple is connected to itself, it follows that $\hh$ is hypergraph connected.\end{proof}

Now we turn to the corresponding problem for connectivity of vertices. In this section we note that a minimum codegree of $\floor{n/3}$ is sufficient to force a spanning tight component, as shown below. This fact was pointed out to us by Richard Mycroft (private communication). This bound is best possible as proved by the following example. Consider the $3$-graph whose vertices are partitioned into three sets $V_0,V_1,V_2$, of as equal sizes as possible, with all edges consisting of three vertices in $V_i$ or of two vertices in $V_i$ and one in $V_{i+1}$, for some $i\in\mathbb Z_3$. Each tight component only meets vertices in two parts, so is far from spanning, yet the minimum codegree is $\floor{n/3}-1$.

\begin{prop}[R.~Mycroft (private communication)]
Any $3$-graph $\hh$ on $n$ vertices with minimum codegree at least $\floor{n/3}$ has at most two tight components.\end{prop}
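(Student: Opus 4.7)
The plan is to associate each pair of vertices with a unique tight component, use a link-graph argument to show that at most two components meet any single vertex, and then rule out a third component by a cross-pair codegree count. For the first step, because $\cod \geq \floor{n/3} \geq 1$, every pair $\{u,v\}$ lies in at least one edge of $\hh$; and any two edges through $\{u,v\}$ share the pair and are tight-adjacent, so all edges through $\{u,v\}$ lie in a single tight component $c(u,v)$. Let $\mathcal{T}(v)$ denote the set of tight components $C$ such that $c(v,u) = C$ for some $u$. I would show $|\mathcal{T}(v)| \leq 2$ as follows. The link graph $L_v$ has minimum degree at least $\floor{n/3}$, and the sets $N_C(v) := \{u : c(v,u) = C\}$ partition its vertex set. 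Any $L_v$-edge $\{u,w\}$ corresponds to an $\hh$-edge $\{v,u,w\}$ whose three pairs all share a single colour, so $L_v$-edges never cross between different parts. Hence each $u \in N_C(v)$ has all its $L_v$-neighbours inside $N_C(v)$, forcing $|N_C(v)| \geq \floor{n/3}+1$, and summing over $\mathcal{T}(v)$ yields $|\mathcal{T}(v)|(\floor{n/3}+1) \leq n-1 < 3\floor{n/3}+3$, so $|\mathcal{T}(v)| \leq 2$.

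Now suppose for contradiction that $\hh$ has at least three tight components. Since $c(u,v) \in \mathcal{T}(u) \cap \mathcal{T}(v)$ for every pair, the $\mathcal{T}(v)$ form a pairwise-intersecting family of $1$- or $2$-element sets, which must exhibit one of two structures: (i) some tight component $C_1$ belongs to every $\mathcal{T}(v)$, or (ii) there are exactly three components $C_1, C_2, C_3$ and all three $2$-subsets of $\{C_1,C_2,C_3\}$ occur as $\mathcal{T}(v)$'s. In case (i), pick $v$ with $\mathcal{T}(v) = \{C_1,C_2\}$ and $w$ with $\mathcal{T}(w) = \{C_1,C_3\}$ (both exist, as $C_2$ and $C_3$ are non-empty). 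The pair $\{v,w\}$ has colour $C_1$, so any third vertex $x$ of an edge through $\{v,w\}$ must lie outside both $N_{C_2}(v)$ and $N_{C_3}(w)$; and $N_{C_2}(v) \cap N_{C_3}(w) = \emptyset$, because a vertex in the intersection would have $\{C_1,C_2,C_3\} \subseteq \mathcal{T}(x)$. Thus the codegree of $\{v,w\}$ is at most $n-2-2(\floor{n/3}+1) = n - 2\floor{n/3} - 4$, which is strictly less than $\floor{n/3}$ because $3\floor{n/3} \geq n-2$, a contradiction.

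Case (ii) is more delicate. Partition $V = V_{12} \cup V_{13} \cup V_{23}$ according to $\mathcal{T}(v)$: then intra-$V_{ij}$ pairs receive colour $C_i$ or $C_j$, while cross-pairs receive the unique common colour. Write $\alpha^k_{ij}$ for the minimum colour-$C_k$ intra-$V_{ij}$ degree. Running the same third-vertex count on a colour-$C_1$ cross-pair between $V_{12}$ and $V_{13}$ (vertices of $V_{23}$ are excluded because the pair $\{v,x\}$ for $x \in V_{23}$ has colour $C_2$) leaves only the intra colour-$C_1$ neighbours of $v$ in $V_{12}$ and of $w$ in $V_{13}$; hence $\alpha^1_{12} + \alpha^1_{13} \geq \floor{n/3}$, with two analogous inequalities for colours $C_2$ and $C_3$. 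Summing the three and using $\alpha^i_{ij} + \alpha^j_{ij} \leq |V_{ij}|-1$ for each part produces $3\floor{n/3} \leq (|V_{12}|-1) + (|V_{13}|-1) + (|V_{23}|-1) = n-3$, once more contradicting $3\floor{n/3} \geq n-2$. The hard part is the bookkeeping in case (ii): one has to notice that the three cross-pair inequalities regroup naturally into the three intra-part identities, but once this observation is made the contradiction is immediate.
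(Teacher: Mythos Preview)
Your argument is correct. Both proofs begin with the same colouring of $K_n$ by tight components and the link-graph observation that no vertex meets three colours, but they diverge after that.

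The paper first proves an additional local lemma: no \emph{triangle} of $K_n$ (not just no hyperedge) carries three distinct colours. It then fixes a single vertex $v$ meeting two colours, partitions the remaining vertices into the two colour classes $R,B$, and uses the triangle lemma to force any third colour to lie entirely inside one class; a codegree count inside that class finishes in one stroke.

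You instead bypass the triangle lemma and exploit the Helly-type structure of the family $\{\mathcal{T}(v)\}$: an intersecting family of $1$- and $2$-element sets either has a common element (your case~(i)) or is the edge set of a triangle (your case~(ii)). Case~(i) is then dispatched by a cross-pair codegree bound rather than the paper's single-vertex count, and case~(ii) by the neat averaging identity $\sum_k (\alpha^{i}_{ij}+\alpha^{j}_{ij}) \le \sum |V_{ij}|-1 = n-3$. The trade-off: the paper's route is shorter (one case, but needs the extra triangle lemma), while yours replaces that lemma with an elementary structural dichotomy at the cost of handling two cases. Both yield the same sharp contradiction $3\lfloor n/3\rfloor \le n-3$ (or the analogous one) against $3\lfloor n/3\rfloor \ge n-2$.
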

\begin{proof}Suppose not. Let  $K_n$ denote the complete graph on the vertices of $\hh$. Colour the hyperedges of $\hh$ according to the tight component they are in, and give each edge $e=uv$ of $K_n$ the colour of those hyperedges of $\hh$ which contain $\{u,v\}$.

First, we show that no triangle of $K_n$ has more than two colours. If $xy$, $yz$ and $zx$ are different colours then consider the sets of vertices $A,B,C$ which can be used to extend $xy,yz,zx$ respectively to hyperedges. In the colouring of $K_n$, every vertex in $A$ has two edges of the first colour to $\{x,y,z\}$, etc., so these sets are disjoint from each other and $\{x,y,z\}$. But each has size at least $\floor{n/3}$, so we have $n\geq 3+3\floor{n/3}$, a contradiction.

Second, we show that no vertex meets three colours. If $v$ does, say red, green and blue, let $R$ (respectively, $G$ or $B$) be the sets of vertices adjacent to $v$ by red (respectively, green or blue) edges. These are disjoint, but if $vx$ is any red edge then there are at least $\floor{n/3}$ vertices which extend it to a red hyperedge, so $\abs{R}\geq 1+\floor{n/3}$. The same applies to $G$ and $B$, so $\abs{R}+\abs{G}+\abs{B}>n$, contradiction.

Now consider any vertex $v$ which meets edges of two colours (this trivially exists), say red and blue. Let $R$ be the set of all vertices with red edges to $v$, and define $B$ similarly. $R$ and $B$ partition $V-v$. By assumption, a third colour, green, is used somewhere; it cannot be between $R$ and $B$ as there are no $3$-coloured triangles, so it is within $R$, say. If $xy$ is such an edge then $vx$ extends to $\floor{n/3}$ red hyperedges and $xy$ extends to $\floor{n/3}$ green hyperedges. Each of the vertices which extends one of these two is in $R$ (if $vxz$ is a red hyperedge then $vz$ is red; if $xyz$ is green then $z\not\in B$ since that would create a $3$-coloured triangle). So $\abs{R}\geq 2+2\floor{n/3}$ and as before $\abs{B}\geq 1+\floor{n/3}$, giving a contradiction.
\end{proof}

\begin{cor}[R.~Mycroft (private communication)]\label{one-third}
Any $3$-graph $\hh$ on $n$ vertices with $\cod\geq\floor{n/3}$ has a spanning tight component.\end{cor}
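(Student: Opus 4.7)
My plan is to derive the corollary from the preceding proposition by a short contradiction argument, so that the structural work (bounding the number of tight components by two) is already in hand.

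First, I would observe that the codegree hypothesis $\delta_2(\hh)\geq\lfloor n/3\rfloor$ forces every pair of vertices to lie in at least one hyperedge, provided $n\geq 3$; the cases $n<3$ are degenerate and can be dispensed with at the outset. In particular every vertex is contained in some hyperedge, hence in some tight component. Combined with the proposition, which caps the number of tight components at two, this reduces matters to the case where $\hh$ has exactly two tight components, which --- following the colour language used in the proof of the proposition --- I shall call red and blue. If instead only one tight component exists then it is automatically spanning.

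Second, I would aim for a contradiction by assuming that neither the red nor the blue component is spanning. Then I can pick a vertex $v$ that meets only blue hyperedges and a vertex $w$ that meets only red hyperedges. The codegree condition applied to the pair $\{v,w\}$ produces at least $\lfloor n/3\rfloor\geq 1$ hyperedges of the form $\{v,w,x\}$. But any such hyperedge contains $w$, forcing it to lie in the red component, and contains $v$, forcing it to lie in the blue component --- a contradiction.

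The whole argument is thus a single pigeonhole-style observation layered on top of the proposition, and I do not anticipate any real obstacle. The only point requiring care is to ensure that the two ``monochromatic'' witness vertices $v$ and $w$ really do exist, which is exactly the assumption that neither tight component spans.
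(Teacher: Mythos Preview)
Your argument is correct and follows essentially the same route as the paper: both use the preceding proposition to reduce to at most two tight components, then observe that any vertex missed by one component must, via the codegree condition, lie in edges belonging to the other. The paper phrases this directly (if one component misses $x$, every edge through $x$ lies in the other, which therefore spans), while you phrase it as a contradiction from two witness vertices; the content is the same.
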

\begin{proof}If the first tight component does not meet some vertex $x$ then for each other vertex $y$, the edges containing $x$ and $y$ must belong to the other tight component, which therefore meets all vertices.\end{proof}

The example given above shows that reducing the minimum codegree condition even by $1$ allows hypergraphs where no tight component meets more than $\ceil{2n/3}$ vertices. This is the motivation for Definition~\ref{function}: we have shown that $f_3(x)=1$ for all $x>1/3$, but $f_3(1/3)\leq 2/3$.

We will show that a minimum codegree of $n/r-O(1)$ implies that some tight component meets at least $n/(r-2)-O(1)$ vertices for each integer $r\geq 3$. We also show that this is almost best possible: for infinitely many values of $r$ there are hypergraphs with minimum codegree $(1/r-O(r^{-3}))n$ in which every tight component meets fewer than $n/(r-2)$ vertices.

\section{Upper bounds}\label{sec:ub}
In this section we give a construction based on finite projective planes. (As we only consider finite projective planes in this paper, we shall henceforth omit to specify finiteness.) A projective plane of order $s$ is an arrangement of points and lines such that each point lies on $s+1$ lines, each line contains $s+1$ points, each pair of points is contained in a unique line, and each pair of lines meet in a unique point. Such a structure is known to exist whenever $s$ is a prime power. Bruck and Ryser \cite{BR49} proved that if $s\equiv 1$ (mod $4$) or $s\equiv 2$ (mod $4$), and $s$ is not the sum of two squares, then no projective plane exists. The existence of a projective plane of order $10$ was ruled out by extensive computer analysis, completed by Lam, Thiel and Swiercz \cite{LTS89}, but for every other value of $s$ which is neither a prime power nor ruled out by the Bruck--Ryser result, it is an open question. We will consider a projective plane as a hypergraph, where the vertices are the points and the edges are the lines.

Let $\tc$ denote the number of vertices of the largest tight component of the $3$-graph $\hh$.

\begin{thm}\label{construct}For each $r\geq 3$ for which a projective plane of order $r-2$ exists, and any $n$, there exists an $n$-vertex $3$-graph $\hh$ satisfying
\begin{gather*}
\cod=\bfrac{r-3+\frac2{r-1}}{r^2-3r+3}n-O(1)\quad\text{and}\\
\tc=\bfrac{r-1}{r^2-3r+3}n+O(1)\,.
\end{gather*}\end{thm}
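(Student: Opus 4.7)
The plan is to take $\hh$ to be an augmented blowup of the projective plane $\pi$ of order $r-2$. Write $P:=r^2-3r+3$ for the common number of points and lines of $\pi$, and partition $V(\hh)$ into $P$ parts $V_p$, one per point of $\pi$, each of size $m:=n/P$ up to integer rounding. For each point $p$ I would choose a near-regular edge colouring $\chi_p:\binom{V_p}{2}\to\{L:p\in L\}$ of the complete graph on $V_p$ by the $r-1$ lines through $p$, arranged so that every vertex of $V_p$ is incident to $(m-1)/(r-1)+O(1)$ pairs of each colour class $E_p^L$; the cyclic colouring $\chi_p(\{i,j\})\equiv i+j\pmod{r-1}$ on $V_p\cong\mathbb{Z}_m$ does the job. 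The edges of $\hh$ are then of two kinds: the \emph{tripartite} edges $\{u,v,w\}$ with $u\in V_{p_1}$, $v\in V_{p_2}$, $w\in V_{p_3}$ for distinct collinear points $p_1,p_2,p_3$ of $\pi$; and the \emph{$2$-$1$} edges $\{u,v,w\}$ in which $\{u,v\}\in E_p^L$ for some line $L\ni p$ and $w\in V_q$ for some $q\in L\setminus\{p\}$.

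For each line $L$ of $\pi$, let $\hh_L$ collect the tripartite edges on $L$ together with those $2$-$1$ edges whose underlying pair is $L$-coloured. The key structural claim is that the $\hh_L$ are exactly the tight components of $\hh$. Within $\hh_L$ the tripartite edges are tightly connected among themselves via shared cross-part pairs, each $2$-$1$ edge connects to other $2$-$1$ edges of the same colour through its same-part pair, and the $2$-$1$ edges link to tripartite edges through the cross-part pair formed by a $2$-vertex and the $1$-vertex; for $r=3$ there are no tripartite edges, but the $2$-$1$ edges of the two orientations still link directly through shared cross-part pairs. Conversely, if $L\neq L'$ no edge of $\hh_L$ shares two vertices with an edge of $\hh_{L'}$: a shared cross-part pair $\{u,v\}\in V_p\times V_q$ forces both edges to live on the unique line $\overline{pq}$, while a shared same-part pair $\{u,v\}\subset V_p$ has the single colour $\chi_p(\{u,v\})$, so both edges belong to one $\hh_L$. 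A brief case analysis across the three edge-type combinations makes this rigorous. Since tripartite edges cover $W_L:=\bigcup_{p\in L}V_p$ whenever $r\geq 4$, and $2$-$1$ edges do so for $r=3$ (each colour class being nonempty by near-regularity), the largest tight component has size $(r-1)m+O(1)=\tfrac{r-1}{r^2-3r+3}n+O(1)$.

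Finally I would compute $\cod$. For a cross-part pair $\{u,v\}$ with $u\in V_p$, $v\in V_q$ and $L=\overline{pq}$, the tripartite edges contribute $(r-3)m$ extensions (one third vertex in each $V_r$ for $r\in L\setminus\{p,q\}$), and the $2$-$1$ edges contribute the $u'\in V_p$ with $\{u,u'\}\in E_p^L$ together with the $v'\in V_q$ with $\{v,v'\}\in E_q^L$, totalling $2\bigl((m-1)/(r-1)+O(1)\bigr)$ by near-regularity, for a grand total of $\bigl(r-3+\tfrac{2}{r-1}\bigr)m+O(1)$. For a same-part pair $\{u,v\}\in E_p^{L^*}$ there are no tripartite extensions, but the $2$-$1$ edges provide $(r-2)m$ extensions (one per $q\in L^*\setminus\{p\}$ and $w\in V_q$), which exceeds the cross-part value since $2/(r-1)\leq 1$ for $r\geq 3$. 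Hence the minimum codegree is attained by cross-part pairs and equals $\tfrac{r-3+2/(r-1)}{r^2-3r+3}n-O(1)$. The main delicacy is arranging the pair colourings $\chi_p$ to be near-regular with only $O(1)$ discrepancy per vertex, as the uniformity of the cross-part codegree depends critically on this; the cyclic construction delivers it explicitly, and the $O(1)$ slack in the theorem absorbs both the colouring discrepancy and the rounding of part sizes to integers.
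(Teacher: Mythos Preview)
Your construction is correct and is essentially the paper's own: the paper colours $K_n$ via the projective plane (cross-part pairs by the unique line through the two points, within-part pairs near-regularly by the $r-1$ lines through that point) and takes all monochromatic triangles as the hyperedges, which yields exactly your tripartite and $2$-$1$ edges together with a negligible set of within-part monochromatic triangles that affect neither the minimum codegree nor the tight-component structure. The codegree and component-size computations then coincide with yours.
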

\begin{rem}In fact, provided $r^2-3r+3\mid n$, we do not need the ${}+O(1)$ term in the latter expression.\end{rem}
\begin{rem}Writing $x=\frac{r-3+\frac2{r-1}}{r^2-3r+3}$, we have $x=1/r-O(r^{-3})$ and
\[f_3(x)\leq\frac{r-1}{r^2-3r+3}<\frac{1}{1/x-2}<\frac{1}{r-2}\,.\]\end{rem}
\begin{proof}
Let $\hh[P]_{r-2}$ be a projective plane of order $r-2$; this is an $(r-1)$-uniform hypergraph with $r^2-3r+3$ vertices $\seq v{r^2-3r+3}$ and $r^2-3r+3$ edges, with each vertex having degree $r-1$ and each pair of vertices contained in exactly one edge. Associate each edge with a different colour.

Colour the complete graph $K_n$ on $n$ vertices as follows. Divide the vertices as evenly as possible into $r^2-3r+3$ classes $\seq C{r^2-3r+3}$.
For each class $C_i$, using the $r-1$ colours corresponding to the edges of $\hh[p]_{r-2}$ meeting $v_i$, colour the edges within $C_i$ such that
for each vertex the numbers of incident edges of each colour are as equal as possible.
Note that at any vertex the discrepancy between any two colours will be at most $2$, since this can be achieved by partitioning the edges within $C_i$
into matchings of size $\floor{\abs{C_i}/2}$ and making each matching monochromatic.
Colour each edge between two classes $C_i$ and $C_j$, where $i\neq j$, according to the unique edge of $\hh[p]_{r-2}$ which contains $v_i$ and $v_j$. Figure~\ref{fig:proj} shows such a colouring for $r=4$.

\begin{figure}\begin{center}
\includegraphics[width=.75\textwidth]{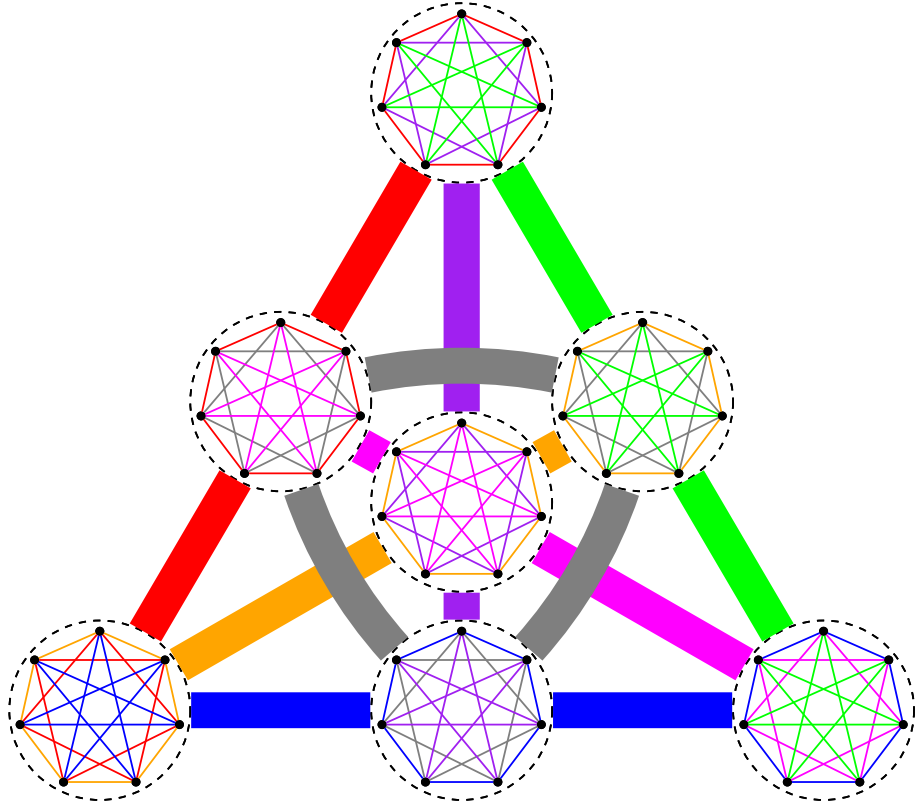}
\end{center}\caption{A construction based on $\hh[p]_2$.}\label{fig:proj}\end{figure}

Now define a hypergraph $\hh$ on the vertex set of $K_n$, whose edges are the monochromatic triangles of this colouring of $K_n$. If we give edges of $\hh$ the same colour as the corresponding triangle, each tight component is monochromatic, and each colour touches $r-1$ classes $C_i$, so each tight component has at most as many vertices as the $r-1$ largest classes.

Fix a colour $c$. Provided $n$ is sufficiently large in terms of $r$, we may choose classes $C_i$ and $C_j$ and vertices $v,w\in C_i$ and $x,y\in C_j$ such that $vwx$ and $wxy$ are hyperedges of colour $c$. For any other hyperedge of colour $c$, there is a tight path of length at most two to an edge containing $vw$ or $xy$, and so each colour corresponds to exactly one tight component. Thus we have
\[\tc=\frac{r-1}{r^2-3r+3}n+O(1)\,.\]
Fix a pair of vertices $x,y\in V(\hh)$, and let $e$ be the edge of $\hh[p]_{r-2}$ corresponding to the colour of $xy$. The {\em degree} $d_{\hh}(x,y)$ is the number of hyperedges of $\hh$ containing both $x,y$. If $x$ and $y$ are in the same class, all vertices in the other $r-2$ classes corresponding to vertices of $e$ form monochromatic triangles with $x,y$, so we have
\[d_{\hh}(x,y)\geq\frac{r-2}{r^2-3r+3}n+O(1)\,.\]
If $x\in C_i$ and $y\in C_j$ with $i\neq j$, then all vertices in the other $r-3$ classes corresponding to vertices of $e$ form monochromatic triangles with $x,y$, as do the vertices in $C_i$ with an appropriately coloured edge to $x$, and those in $C_j$ with an appropriately coloured edge to $y$. In total, we have
\begin{equation}d_{\hh}(x,y)=\frac{r-3+\frac2{r-1}}{r^2-3r+3}n+O(1)\,.\label{equality}\end{equation}
Thus we have
\[\cod\geq\min\biggl(\frac{r-2}{r^2-3r+3},\frac{r-3+\frac2{r-1}}{r^2-3r+3}\biggr)n+O(1)\,.\]
For $r=3$ the two bounds coincide, and for $r>3$ the latter is smaller. Since there are always some pairs satisfying \eqref{equality}, we have the required equality.\end{proof}
Recall that $(r_i)_{i\geq 0}$ is the sequence of integers such that $r_i-2$ is a prime power or $0$,
i.e.\ the sequence that begins $2,3,4,5,6,7,9,10,11,13,15,\ldots.$ For each $i\geq 1$, \theorem{construct} shows that
\[f_3\bfrac{r_i-3+\frac2{r_i-1}}{r_i^2-3r_i+3}\leq\frac{r_i-1}{r_i^2-3r_i+3}\,\]
and the fact that $f_3(x)$ is increasing, together with the trivial bound $f_3(x)\leq 1$, gives the upper bounds claimed in \theorem{main}
\section{Lower bounds}
Next we give a lower bound which is close to the upper bound of the previous section for large $r$. We will use the following result of F\"uredi on fractional matchings in hypergraphs \cite{Fur81}. A \textit{matching} in a hypergraph $\hh$ is a set of disjoint edges, and the matching number $\nu(\hh)$ is the maximum size of a matching in $\hh$. A \textit{fractional matching} is a weight function $w:E(\hh)\to[0,1]$ such that $\sum_{e\ni v}w(e)\leq 1$ for each $v\in V(\hh)$, and the \textit{fractional matching number} $\nu^*(\hh)$ is the maximum of $\sum_{e\in E(\hh)}w(e)$ over all fractional matchings.

\begin{thm}[F\"uredi \cite{Fur81}]\label{furedi}Let $\hh$ be a hypergraph with edges of size at most $k$ which does not contain $p+1$ vertex-disjoint projective planes of order $k-1$, for some $k\geq 3$ and $p\geq 0$. Then $\nu^*(\hh)\leq(k-1)\nu(\hh)+p/k$.\end{thm}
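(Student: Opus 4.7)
The plan is to first reduce to the case where $\hh$ is $k$-uniform. Pad each edge of size $s < k$ by attaching $k - s$ private new vertices not used in any other edge. This operation leaves both $\nu(\hh)$ and $\nu^*(\hh)$ unchanged (a fractional matching on $\hh$ lifts verbatim, and any matching in the padded hypergraph projects back), and since every new vertex has degree $1$ it cannot participate in a projective plane, so the maximum number of vertex-disjoint projective planes of order $k-1$ is also preserved. Henceforth assume $\hh$ is $k$-uniform.

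I would then proceed by induction on $\nu(\hh)$. The base case is $\nu(\hh) = 1$, when $\hh$ is intersecting. Here the parameter $p$ is at most $1$, since two vertex-disjoint projective planes could not each be intersecting with the other. I would prove the required bound $\nu^*(\hh) \leq k - 1 + 1/k$ via LP duality, by exhibiting a fractional cover of that total weight, and then argue by a Fisher/de~Bruijn--Erd\H{o}s-style extremal characterization that the bound drops to $\nu^*(\hh) \leq k - 1$ unless $\hh$ actually contains a projective plane of order $k-1$. This covers both sub-cases $p = 0$ and $p = 1$.

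For the inductive step, fix a maximum matching $M = \{e_1, \ldots, e_\nu\}$ and an optimal fractional matching $w$. Assigning cover weight $1$ to every vertex of $V(M)$ is feasible (otherwise $M$ could be extended), giving the trivial bound $\nu^*(\hh) \leq k\nu(\hh)$. The plan for sharpening this to $(k-1)\nu(\hh) + p/k$ is to analyse the excess $\nu^*(\hh) - (k-1)\nu(\hh)$: if it is at most $p/k$ we are done, and otherwise one locates some $e_i \in M$ such that the structure of $w$ on edges meeting $e_i$ forces a projective plane of order $k-1$ on a vertex set disjoint from $M \setminus \{e_i\}$; deleting the vertices of that plane reduces both $\nu(\hh)$ and the available $p$ by $1$, and the induction hypothesis closes the case.

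The main obstacle is the base case together with the "finding a projective plane" step in the inductive argument: both depend on a precise characterization of the extremal and near-extremal structure of intersecting $k$-uniform hypergraphs attaining the Füredi bound, essentially the statement that $\nu^*$ being close to $k - 1 + 1/k$ forces a projective-plane sub-structure. This stability-type classification is the combinatorial heart of the result; once it is in hand the rest is a fairly mechanical packaging of LP duality with a deletion-induction scheme.
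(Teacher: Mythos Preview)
The paper does not prove this theorem: it is quoted as a result of F\"uredi \cite{Fur81} and used as a black box (specifically, only the intersecting case is needed, in the derivation of Corollary~\ref{intersecting}). There is therefore no proof in the paper to compare your proposal against.

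As for the proposal itself, it is an outline rather than a proof, and you say as much. The reduction to the $k$-uniform case by padding is fine. The overall architecture --- LP duality plus an induction on $\nu$ that peels off projective planes --- is broadly the shape of F\"uredi's argument, but the step you flag as the ``main obstacle'' is exactly where all the content lies: showing that an intersecting $k$-uniform hypergraph with $\nu^*>k-1$ must contain a projective plane of order $k-1$, and more generally that near-extremal fractional weight forces such a sub-structure around some matching edge. Your proposal names this step but does not carry it out, so as written it is a plan and not a proof. If you want to complete it, the relevant extremal/stability input is precisely what F\"uredi establishes in \cite{Fur81}; absent that, the induction cannot get started.
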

We write $\Delta_1(\hh)$ for the maximum vertex degree of a hypergraph $\hh$. A hypergraph is \textit{intersecting} if any two edges intersect.
\begin{cor}\label{intersecting}If $\hh$ is an intersecting $k$-uniform multi-hypergraph then
\[\Delta_1(\hh)\geq\frac{e(\hh)}{k-1+p/k}\,,\]
where $p=1$ if a projective plane of order $k-1$ exists, and $p=0$ otherwise. Further, if $k\geq 3$ and $\Delta_1(\hh)<\frac{e(\hh)}{k-1}$
then the underlying simple hypergraph is a projective plane.\end{cor}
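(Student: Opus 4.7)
The plan is to combine F\"uredi's theorem with the trivial fractional matching obtained from uniform weighting. Since $\hh$ is intersecting, $\nu(\hh)=1$; moreover $\hh$ cannot contain $p+1$ vertex-disjoint projective planes in either case, because if $p=1$ the intersecting property already forbids two vertex-disjoint subhypergraphs, while if $p=0$ no projective plane of order $k-1$ exists at all. So \theorem{furedi} gives
\[\nu^*(\hh)\leq(k-1)\nu(\hh)+p/k=k-1+p/k\,.\]
On the other hand, assigning weight $1/\Delta_1(\hh)$ to every edge is a legitimate fractional matching, yielding the lower bound $\nu^*(\hh)\geq e(\hh)/\Delta_1(\hh)$. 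Chaining these two inequalities gives the first statement immediately.

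For the second part, suppose $k\geq 3$ and $\Delta_1(\hh)<e(\hh)/(k-1)$. Then the uniform fractional matching shows $\nu^*(\hh)>k-1$, which by \theorem{furedi} applied with $p=0$ is inconsistent with $\hh$ containing no projective plane of order $k-1$. Hence $\hh$ contains a sub-hypergraph $\hh[p]$ isomorphic to a projective plane of order $k-1$ on some $k^2-k+1$ vertices $V_P$. It remains to show that the underlying simple hypergraph of $\hh$ has no further edges.

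Let $f$ be any edge of $\hh$. Since $\hh$ is intersecting, $f$ meets every line of $\hh[p]$, so $f\cap V_P$ is a blocking set of $\hh[p]$. The standard fact that every blocking set in a projective plane of order $k-1$ has size at least $k$, with equality exactly for the lines, forces $|f\cap V_P|=k$, hence $f\subseteq V_P$ and $f$ is a line of $\hh[p]$. Thus the underlying simple hypergraph of $\hh$ coincides with $\hh[p]$, a projective plane of order $k-1$.

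The only non-routine point is the appeal to the minimum blocking set result; everything else is a direct packaging of \theorem{furedi} with a uniform fractional matching. I expect no real obstacle, but the proof should state explicitly which value of $p$ is used in each half, since the second part must invoke the $p=0$ case of \theorem{furedi} regardless of whether a projective plane of order $k-1$ happens to exist.
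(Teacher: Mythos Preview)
Your argument is essentially the paper's, with one genuine omission: \theorem{furedi} is stated only for $k\geq 3$, so your proof does not cover $k=2$. The paper handles $k=2$ by a one-line direct argument (either some vertex lies in every edge, or the intersecting $2$-graph lives on three vertices and the average degree already gives $\Delta_1\geq 2e(\hh)/3$). This case is actually used later, when the corollary is applied with $k=r-1$ and $r=3$.

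A second, more cosmetic point: you apply \theorem{furedi} to the multi-hypergraph $\hh$ itself, whereas the paper applies it to the underlying simple hypergraph $\hh'$ and then uses the fractional matching $w(e)=(\text{multiplicity of }e)/\Delta_1(\hh)$ on $\hh'$. Your route is fine once one observes that $\nu$, $\nu^*$ and the containment of a projective plane are identical for $\hh$ and $\hh'$ (the upper bound $w(e)\leq 1$ is never binding because of the vertex constraints), but this deserves a sentence.

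Your blocking-set argument for ``no other edges'' is exactly what the paper's terse ``since it is intersecting, no other edges'' is hiding; spelling it out is an improvement, not a divergence.
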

\begin{proof}If $k=2$ then the claimed bound is $\Delta_1(\hh)\geq\frac{e(\hh)}{1+1/2}=2e(\hh)/3$ and either there is a vertex in every edge
(giving $\Delta_1(\hh)=e(\hh)$) or $\hh$ has only three vertices; in the latter case the average degree is $2e(\hh)/3$,
giving $\Delta_1(\hh)\geq2e(\hh)/3$ as required.

If $k\geq 3$ we may apply \theorem{furedi} to the underlying simple hypergraph $\hh'$. Since $\hh'$ is intersecting,
$\nu(\hh')=1$, and so $\nu^*(\hh')\leq k-1+p/k$. For each $e\in E(\hh')$, let $w(e)$ be the number of copies of $e$ in the multi-hypergraph $\hh$, divided by $\Delta_1(\hh)$.
Clearly, for each $v\in V(\hh')$,
\[\sum_{e\ni v}w(e)=\frac{d_{\hh}(v)}{\Delta_1(\hh)}\leq 1\,,\]
so $w$ is a fractional matching for $\hh'$. Thus,
\[k-1+p/k\geq\sum_{e\in E(\hh')}w(e)=\frac{e(\hh)}{\Delta_1(\hh)}\,,\]
giving the required bound. If $k\geq 3$ and $\Delta_1(\hh)<\frac{e(\hh)}{k-1}$ then $\nu^*(\hh')\geq\frac{e(\hh)}{\Delta_1(\hh)}>k-1$ so $\hh'$ contains a projective plane, and, since it is intersecting, no other edges.\end{proof}
\begin{thm}\label{lwrbnd}Fix an integer $r\geq 3$. Suppose $\hh$ is a $3$-uniform hypergraph on $n$ vertices with $\cod\geq(1-\eps)n/r$,
where $0\leq\eps<\frac 1{r+1}$. Then
\begin{equation}\label{bounds}\tc\geq\begin{cases}\min\{(1-3\eps),2/3\}n\quad &\text{if }r=3\\(1-3\eps)\dfrac{n}{r-2}\quad &\text{otherwise.}\end{cases}\end{equation}
\end{thm}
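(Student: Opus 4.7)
The plan is a case analysis on $s(v) := \abs{\mathcal{T}(v)}$, where $\mathcal{T}(v)$ denotes the set of tight components containing an edge through $v$. For each such $T$, write $N_T(v) = \{w \in V \setminus \{v\} : \{v, w\} \subset e \text{ for some } e \in T\}$. Since two edges sharing a pair are tightly connected, each pair lies in a unique tight component, so $\{N_T(v)\}_{T \in \mathcal{T}(v)}$ partitions $V \setminus \{v\}$. Fixing any edge $\{v, x, y\} \in T$ and applying the codegree hypothesis to $\{v, x\}$ gives $\abs{N_T(v)} \geq (1-\eps)n/r + 1$, and summing together with $\eps < 1/(r+1)$ forces $s(v) \leq r$.

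If some $v$ satisfies $s(v) \leq r-2$, pigeonhole on the partition produces $T \in \mathcal{T}(v)$ with $\abs{N_T(v)} \geq (n-1)/(r-2)$, so $\abs{V(T)} \geq n/(r-2) \geq (1-3\eps)n/(r-2)$ and we are done. Otherwise every vertex has $s(v) \geq r-1$, whence $\sum_T \abs{V(T)} = \sum_v s(v) \geq (r-1)n$; if $m$ is the number of tight components, then $\tc \geq (r-1)n/m$, which already delivers the target bound whenever $m \leq (r-1)(r-2)/(1-3\eps)$.

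When $m$ is larger, I would apply \corollary{intersecting} to the intersecting multi-hypergraph $H^*$ on the tight components whose edges are $\{\mathcal{T}(v) : v \in V(\mathcal{H})\}$ (with multiplicity): either $\tc = \Delta_1(H^*) \geq n/(r-2)$ directly, or the underlying simple hypergraph of $H^*$ is a projective plane. In this projective-plane subcase, each $v$ is labelled by a line $L_v = \mathcal{T}(v)$, and writing $n_L = \abs{\{v : L_v = L\}}$ one has $\abs{V(T)} = \sum_{L \ni T} n_L$ together with (by a short double count using that distinct lines of a projective plane meet in exactly one point) the identity $\sum_T \abs{V(T)}^2 = n^2 + (r-2) \sum_L n_L^2$. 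The codegree condition applied to pairs within each class then constrains the distribution $(n_L)$ and the within-class pair-to-component assignments sufficiently to force some $\abs{V(T)} \geq (1-3\eps)n/(r-2)$. The case $r = 3$ is handled separately, using that $s(v) \leq 2$ together with the intersecting property of $\{\mathcal{T}(w)\}_w$ forces $V = V(T_1) \cup V(T_2)$, whose overlap is controlled by the codegree.

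The hard part will be the projective-plane subcase of Case 2: the intersecting-hypergraph bound alone only gives $(r-1)n/(r^2-3r+3)$, which is strictly weaker than $(1-3\eps)n/(r-2)$ for small $\eps$. Here the codegree hypothesis must be used essentially — either to exclude the balanced projective-plane configuration (which itself violates $\cod \geq (1-\eps)n/r$ for sufficiently small $\eps$) or to extract a sufficiently large component from any unbalanced configuration via the identity above.
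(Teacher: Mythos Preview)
Your case split agrees with the paper's through the easy part: the bound $s(v)\leq r$, the pigeonhole when some $s(v)\leq r-2$, and the intersecting multi-hypergraph $H^*$ are all set up the same way. But there is a genuine gap: you never isolate the situation where some vertex has $s(v)=r$. Your ``otherwise'' clause lumps together $s(v)=r-1$ for all $v$ and the mixed case $s(v)\in\{r-1,r\}$, and then applies \corollary{intersecting}. That corollary is stated for \emph{$k$-uniform} intersecting hypergraphs; if some edges of $H^*$ have size $r$, you must take $k=r$ in F\"uredi's theorem, which only yields $\Delta_1(H^*)\geq n/(r-1+p/r)$, not $n/(r-2)$, and the projective-plane alternative becomes a plane of order $r-1$ rather than $r-2$. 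So the whole intersecting-hypergraph route collapses as soon as a single vertex sees $r$ components. (Your claim for $r=3$ that $s(v)\leq 2$ is likewise unjustified; $s(v)=3$ is permitted.)

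The paper treats this as a separate Case~3 with an entirely different argument: fix a vertex $x$ with $s(x)=r$, partition $V\setminus\{x\}$ into classes $A_1,\ldots,A_r$ by the colour of the pair with $x$, locate an additional component $\mathcal B$ not meeting $x$, and prove (Claim~2) that for any two indices $i,j$ with $b_i,b_j>0$ one has $3\cod\leq b-b_i-b_j+a_i+a_j$; averaging this over pairs and rearranging gives $b\geq(1-3\eps)n/(r-2)$. None of this is visible in your outline. For Case~2 (all $s(v)=r-1$) your setup matches the paper, but your projective-plane endgame is left vague; the paper's line is cleaner than a second-moment identity: it shows directly (Claim~1, via an expected-codegree calculation over random pairs from distinct classes) that a projective-plane structure forces $\eps\geq\frac{r-3}{(r-1)(r^2-3r+3)}$, hence $3\eps\geq\frac{1}{r^2-3r+3}$, so the averaging bound $\tc\geq\frac{r-1}{r^2-3r+3}\,n$ already meets $(1-3\eps)n/(r-2)$.
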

\begin{proof}
Again, we colour the edges of the complete graph on the same vertex set. Give $xy$ the colour of the tight component containing edges of the form $xyz$. Fix a vertex $x$. If $x$ meets an edge $xy$ of a particular colour in the graph, there are at least $\cod$ edges of the form $xyz$ in $\hh$ which are in the corresponding tight component. Thus if $x$ meets an edge of a certain colour, it meets at least $\cod$ such edges. Since $\eps<1/(r+1)$, $\cod>n/(r+1)$, so the number of tight components meeting a vertex $x$ is at most $r$. We distinguish three cases, as follows.
\begin{case}Some vertex meets at most $r-2$ tight components.\end{case}
In this case, these $r-2$ components must between them cover all the vertices, so at least one must meet at least $n/(r-2)$ vertices.
\begin{case}Every vertex meets exactly $r-1$ tight components.\end{case}
We define an auxiliary multi-hypergraph $\hh[f]$ as follows. The vertices of $\hh[f]$ correspond to tight components of $\hh$. The edges of $\hh[f]$ correspond to vertices of $\hh$; an edge $e_v$ of $\hh[f]$ corresponding to a vertex $v$ of $\hh$ contains the $r-1$ vertices of $\hh[f]$ corresponding to tight components which meet $v$. Thus $\hh[f]$ is $(r-1)$-uniform and $e(\hh[f])=n$.
Any two edges of $\hh[f]$ intersect: $e_u$ and $e_v$ both contain the vertex corresponding to the tight component containing edges of the form $uvw$. If $r=3$, by \corollary{intersecting}, such an $\hh[f]$ has a vertex meeting at least $2n/3$ edges, and hence $\hh$ has a tight component meeting this many vertices. If $r>3$, by \corollary{intersecting}, either $\hh[f]$ has a vertex meeting at least $n/(r-2)$ edges or its underlying simple hypergraph $\hh[f]'$ is a projective plane of order $r-2$. In the former case we are done; in the latter case some vertex of $\hh[f]$ meets at least $\frac{r-1}{r^2-3r+3}n=\frac{1}{r-2+1/(r-1)}n$ edges. Note that
\[\frac{1}{r-2+1/(r-1)}=\biggl(1-\frac{1}{r^2-3r+3}\biggr)\frac{1}{r-2}\,.\]
\begin{clm}If $\hh[f]'$ is a projective plane of order $r-2$ then $\eps\geq\frac{r-3}{(r-1)(r^2-3r+3)}$.\end{clm}
\begin{poc}Colour each pair of vertices of $\hh$ according to the tight component the edges containing that pair are in.

If $\hh[f]'$ is a projective plane of order $r-2$, partitioning the edges of $\hh[f]$ according to which edge of $\hh[f]'$ they correspond to
gives a partition, $C_1,\ldots,C_{r^2-3r+3}$ say, of the vertices of $\hh$ so that any two vertices in the same class are in exactly the same tight components.
Each class meets $r-1$ tight components, and each pair of classes have a single tight component in common. Thus we may define a hypergraph $\hh[f]''$ having
one vertex $w_i$ for each class $C_i$ and one edge for each tight component, containing the vertices corresponding to classes it meets;
$\hh[f]''$ is also a projective plane of order $r-2$ (dual to $\hh[f]'$).

We fix a vertex $z$ and then choose a pair of vertices $(x,y)$ as follows: choose uniformly at random between the ordered pairs $(i,j)\in[r^2-3r+3]^2$ which satisfy $i\neq j$, and choose (independently and uniformly at random) $x\in C_i$ and $y\in C_j$. Now consider $\prob{xyz\in E(\hh)}$. If $z\in C_k$ we have
\begin{align}\prob{xyz\in E(\hh)}&=\prob{(xyz\in E(\hh))\wedge (k\not\in \{i,j\})}+\prob{(xyz\in E(\hh))\wedge (k\in\{i,j\})}\nonumber\\
&\leq\prob{w_iw_jw_k\in E(\hh[f]'')}+2\prob{(i=k)\wedge(\col{xy}=\col{xz})}\label{ineq1}\,.
\end{align}
Now
\begin{align}\prob{w_iw_jw_k\in E(\hh[f]'')}&=\frac{(r-1)\binom{r-2}2}{\binom{r^2-3r+3}2}\nonumber\\
&=\frac{r-3}{r^2-3r+3}\label{ineq2}\,,
\end{align}
and
\begin{align}
\prob{(k=i)\wedge(\col{xy}=\col{xz})}&<\frac{1}{r^2-3r+3}\cdot\frac{r-2}{r^2-3r+2}\nonumber\\
&=\frac{1}{(r-1)(r^2-3r+3)}\,,\label{ineq3}
\end{align}
since given $i=k$, $\col{xy}$ depends only on $j$, and for each choice of $x$ (other than $x=z$) there are $r-2$ choices of $j$ which give $\col{xy}=\col{xz}$.

Thus, by \eqref{ineq1}, \eqref{ineq2} and \eqref{ineq3}, we have
\begin{align*}\cod&\leq\prob[e]{d_{\hh}(x,y)}\\
&<\frac{r-3+\frac{2}{r-1}}{r^2-3r+3}n\\
&=\biggl(1-\frac{r-3}{(r-1)(r^2-3r+3)}\biggr)\frac nr\,,
\end{align*}
as required. This completes the proof of Claim~\theclm.
\end{poc}
The desired result follows in this case, since if $\eps\geq\frac{r-3}{(r-1)(r^2-3r+3)}$ and $r\geq 4$ then $3\eps\geq \frac{1}{r^2-3r+3}$.
\begin{case}Neither of the above two cases apply.\end{case}
In this case, some vertex $x$ meets exactly $r$ tight components. Divide the remaining $n-1$ vertices into classes $\seq Ar$ of sizes $\seq ar$, according to which tight component edges containing a given vertex and $x$ are in. Note that $a_i\geq \cod \geq(1-\eps)n/r$ for each $i\in [r]$. If these are the only tight components then, since each vertex is met by at least $r-1$ of them, some component meets at least $n(r-1)/r$ vertices. So we may assume that there is another tight component $\mathcal B$ which does not meet $x$. Suppose $\mathcal B$ meets $b_i$ vertices in $A_i$ for each $i$, and in total meets $b$ vertices. Write $S=\{i\in[r]:b_i>0\}$.

If $\abs{S}\leq 2$ then the codegree of any pair which is in an edge of $\mathcal B$ and meets all the (at most 2) parts, is at most $\sum_{i\in S} (a_i-\cod)\leq \sum_{i\in {[r]}} (a_i-\cod) \leq n\eps<n/(r+1)<\cod$, giving a contradiction. So we may assume $\abs{S}\geq 3$.
\begin{clm}For each pair $i\neq j\in S$,
\begin{equation}3\cod\leq b-b_i-b_j+a_i+a_j\,.\label{codeg}\end{equation}\end{clm}
\begin{poc}
First, suppose that there are vertices $v_i\in A_i$ and $v_j\in A_j$ such that edges containing $v_i,v_j$ are in $\mathcal B$. Consider the codegree of this pair.
For each $k\neq i,j$, the only vertices in $A_k$ which can form an edge with $v_i,v_j$ are the $b_k$ vertices which meet $\mathcal B$.
Also, from $A_i$ only the vertices $w\in A_i$ for which there is no edge of the form $xwv_i$ can form an edge with $v_i,v_j$. Since $d_{\hh}(x,v_i)\geq\cod$, and all vertices which complete an edge with $x,v_i$ lie in $A_i$, at most $a_i-\cod$ such $w$ exist (and similarly for $A_j$). Thus we have
\[d_{\hh}(v_i,v_j)\leq\sum_{k\in S\setminus\{i,j\}}b_k+(a_i-\cod)+(a_j-\cod)\,;\]
rearranging, and noting that $\sum_{k\in S} b_k=b$, gives the desired inequality.

Second, suppose that no such vertices exist. Define $S_i$ to be the set of $k\neq i,j$ such that there exist vertices $v_i\in A_i$ and $v_k\in A_k$ which can be extended to an edge of $\mathcal B$. Note that $S_i$ is non-empty, since otherwise we would have $a_i>2\cod$, giving $\sum_{k\in[r]}a_k>(r+1)\cod$, a contradiction.
For each $k\in S$, consider the vertices which extend the pair $v_i,v_k$ to an edge. No vertex from $A_j$ can do this, and so similar reasoning gives $3\cod\leq b-b_i-b_j-b_k+a_i+a_k$. If none of these gives the desired inequality, we must have $a_k-a_j>b_k$ for each $k\in S_i$.
For any $l\in S_i$, picking $v_i \in A_i$ and $v_l \in A_l$ such that the pair $v_i,v_l$ lies in an edge of $\mathcal{B}$, the pair $v_i,v_l$ must have codegree at most
\begin{align*}
\sum_{k\in S_i\setminus\{l\}}b_k+(a_i-\cod)+(a_l-\cod)&<\sum_{k\in S_i\setminus\{l\}}(a_k-a_j)+(a_i-\cod)+(a_l-\cod)\\
&\leq\sum_{k\in S_i\cup\{i\}}\biggl(a_k-(1-\eps)\frac nr\biggr)\\
&\leq\sum_{k\in [r]}\biggl(a_k-(1-\eps)\frac nr\biggr)\\
&=n\eps-1<n/(r+1)<\cod\,,\end{align*}
a contradiction. This completes the proof of Claim~\theclm.\end{poc}

Now, averaging inequality \eqref{codeg} over all pairs $i\neq j\in S$ we get
\[3\cod\leq b-\frac{2(|S|-1)b}{|S|(|S|-1)}+\frac{2(n-(r-|S|)\cod)}{|S|}
=\frac{r-s-2}{r-s}b+\frac{2n-2s\cod}{r-s}\,,\]
where $s=r-\abs{S}$. Rearranging this (noting that $r-s-2>0$) gives
\begin{align*}b&\geq\frac{(3r-s)\cod-2n}{r-s-2}\\
&\geq\frac{(3r-s)(1-\eps)n-2nr}{r(r-s-2)}\,.\end{align*}
It suffices to show that
\[\frac{(3r-s)(1-\eps)n-2nr}{r(r-s-2)}\geq\frac{(1-3\eps)n}{r-2}\,,\]
or equivalently
\[(r-2)\bigl((3r-s)(1-\eps)-2r\bigr)\geq r(r-s-2)(1-3\eps)\,.\]
But
\[(r-2)\bigl((3r-s)(1-\eps)-2r\bigr)-r(r-s-2)(1-3\eps)=2s(1-(r+1)\eps)>0\,,\]
as required.
\end{proof}
We are now in a position to complete the proof of \theorem{main}; recall that the upper bounds were proved in Section~\ref{sec:ub}.

We stated \theorem{lwrbnd} for the range of $\eps$ for which the proof works. However, in this range we also have $\delta_2(\hh)\geq(1-\eps)\frac nr>\frac n{r+1}$ and so, by \theorem{lwrbnd} for $r+1$, $\tc\geq\frac n{r-1}$. This gives a better bound than \eqref{bounds} for $\eps>\frac1{3r-3}$, leading to the lower bounds described in \theorem{main}.

\section*{Acknowledgements}

The first and second authors were supported by the European Research Council (ERC) under the European Union's Horizon 2020 research and innovation programme (grant agreement no.\ 639046).


\begin{thebibliography}{11}

\bibitem{ABH11}
P. Allen, J. B\"ottcher, J. Hladk\'y,
Filling the gap between Tur\'an's theorem and P\'osa's conjecture.
\textit{J. London Mathematical Society} \textbf{84} (2011), no.~2, 269--302.

\bibitem{ABCM} P. Allen, J. B\"ottcher, O. Cooley, R. Mycroft,
Tight cycles and regular slices in dense hypergraphs.
\textit{J. Combin.\ Theory Ser.\ A} \textbf{149} (2017), 30--100.

\bibitem{BR49} R. H. Bruck and H. J. Ryser,
The nonexistence of certain finite projective planes,
\textit{Canadian J. Math.}\ \textbf{1} (1949), 88--93.

\bibitem{CKK} O. Cooley, M. Kang and C. Koch,
The size of the giant high-order component in random hypergraphs,
\textit{Random Structures \& Algorithms} \textbf{53} (2018), no.~2, 238--288.

\bibitem{CKP} O. Cooley, M. Kang and Y. Person,
Largest components in random hypergraphs,
\textit{Combin.\ Probab.\ Comput.}\ \textbf{27} (2018), no.~5, 741--762.

\bibitem{Fur81} Z. F\"uredi,
Maximum degree and fractional matchings in uniform hypergraphs,
\textit{Combinatorica} \textbf{1} (1981), no.~2, 155--162.

\bibitem{spheres} A. Georgakopoulos, J. Haslegrave, R. Montgomery and B. Narayanan,
Spanning surfaces in $3$-graphs,
2018 preprint, arXiv:1808.06864.

\bibitem{GKL16} E. Gy\H{o}ri, G. Y. Katona and N. Lemons,
Hypergraph extensions of the Erd\H{o}s-Gallai Theorem,
\textit{European J. Combin.}\ \textbf{58} (2016), 238--246.

\bibitem{KP16} M. Kahle and B. Pittel,
Inside the critical window for cohomology of random $k$-complexes,
\textit{Random Structures \& Algorithms} \textbf{48} (2016), no.~1, 102--124.

\bibitem{KK99}G. Y. Katona and H. A. Kierstead,
Hamiltonian chains in hypergraphs,
\textit{J. Graph Theory} \textbf{30} (1999), no.~3, 205--212.

\bibitem{LTS89} C. W. H. Lam, L. Thiel and S. Swiercz,
The non-existence of finite projective planes of order $10$,
\textit{Canadian J. Math.}\ \textbf{41} (1989), no.~6, 1117--1123.

\bibitem{LM06} N. Linial and R. Meshulam,
Homological connectivity of random $2$-complexes,
\textit{Combinatorica} \textbf{26} (2006), no.~4, 475--487.

\bibitem{MW09} R. Meshulam and N. Wallach,
Homological connectivity of random $k$-dimensional complexes,
\textit{Random Structures \& Algorithms} \textbf{34} (2009), no.~3, 408--417.

\bibitem{RRS06} V. R\"odl, A. Ruci\'nski and E. Szemer\'edi,
A Dirac-type theorem for $3$-uniform hypergraphs.
\textit{Combin.\ Probab.\ Comput.}\ \textbf{15} (2006), no.~1--2, 229--251.

\bibitem{RRS11} V. R\"odl, A. Ruci\'nski and E. Szemer\'edi,
Dirac-type conditions for Hamiltonian paths and cycles in $3$-uniform hypergraphs
\textit{Advances in Mathematics}\ \textbf{227} (2011), no.~3, 1225--1299.

\end{thebibliography}
\end{document}